\newtheorem{thm}{Theorem}[section]
\newtheorem{lem}[thm]{Lemma}
\title{Estimates of the bounds of $\pi(x)$ and $\pi((x+1)^{2}) - \pi(x^{2})$}
\author{C. P. Wilson}
\begin{document}
\maketitle

\begin{abstract}
We show the following bounds on the prime counting function $\pi(x)$ using principles from analytic number theory, giving an estimate: 
    $$
    2 \log 2 \geq \limsup _{x \rightarrow \infty} \frac{\pi(x)}{x / \log x} \geq \liminf _{x \rightarrow \infty} \frac{\pi(x)}{x / \log x} \geq \log 2
    $$
\begin{center}
    $\wedge \  \exists $m, $M \in \mathbb{R}^{+}$  such that
\end{center}    
    $$
    M>\frac{\pi(x)}{x / \log x}>m
    $$
    for all $x$ sufficiently large.

We also conjecture about the bounding of $\pi((x+1)^{2}) - \pi(x^{2})$, as is relevant to Legendre's conjecture about the number of primes in the aforementioned interval such that:

$$
\left \lfloor\frac{1}{2}\left(\frac{\left(x+1\right)^{2}}{\log\left(x+1\right)}-\frac{x^{2}}{\log x}\right)-\frac{\left(\log x\right)^{2}}{\log\left(\log x\right)}\right \rfloor \leq \pi((x+1)^{2}) - \pi(x^{2}) \leq
$$
$$
\left \lfloor\frac{1}{2}\left(\frac{\left(x+1\right)^{2}}{\log\left(x+1\right)}-\frac{x^{2}}{\log x}\right) + \log^{2}x\log\log x \right \rfloor
$$
\end{abstract}

\section{Introduction}
Let us define:

$$
\Theta(x):=\sum_{p \leq x} \log p, \  \Psi(x):=\sum_{1 \leq n \leq x} \Lambda(n)=\sum_{\substack{p^{m} \leq x \\ m \geq 1}} \log p,
$$

clearly for prime $p$, and $\Lambda(n)=\left\{\begin{array}{ll}
\log p & \text { if } n=p^{k} \text { for some prime } p \text { and integer } k \geq 1 \\
0 & \text { otherwise. }
\end{array}\right.$

The following are simple statements from real analysis that are required for rigorousness' sake: let $\left\{x_{n}\right\}$ be a sequence of real numbers and $L$ be a real number with the following two properties: $\forall \epsilon>0,$ $\exists N$ such that $x_{n}<L+\epsilon$, $\forall n \geq N$. $\forall \epsilon>0\ \wedge \ N \geq 1, \exists n \geq N$ with $x_{n}>L-\epsilon .$ We thus define $L$ as:
$$
\limsup _{n \rightarrow \infty} x_{n}=L
$$
Thus on the contrary we must have:
$$
\liminf _{n \rightarrow \infty} x_{n}=-\limsup _{n \rightarrow \infty}-x_{n}
$$
\pagebreak

\section{Necessary preliminary results}
\begin{thm}
$$
\liminf _{x \rightarrow \infty} \frac{\pi(x)}{x / \log x}=\liminf _{x \rightarrow \infty} \frac{\Psi(x)}{x}=\liminf _{x \rightarrow \infty} \frac{\Theta(x)}{x}
$$
\begin{center}
    $\therefore$
\end{center}
$$
\limsup _{x \rightarrow \infty} \frac{\pi(x)}{x / \log x}=\limsup _{x \rightarrow \infty} \frac{\Psi(x)}{x}=\limsup _{x \rightarrow \infty} \frac{\Theta(x)}{x}
$$
\end{thm}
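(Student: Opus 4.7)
The plan is to establish, by elementary inequalities, a two-sided comparison between the three functions $\pi(x)/(x/\log x)$, $\Theta(x)/x$, and $\Psi(x)/x$, and then take limsups and liminfs. First I would handle the easy pair $\Theta$ versus $\Psi$: since $\Theta$ is the sum over primes and $\Psi$ is the same sum extended to prime powers, we have trivially $\Theta(x)\le\Psi(x)$, while
$$\Psi(x)-\Theta(x)=\sum_{m\ge 2}\Theta(x^{1/m})\ll \sqrt{x}\,\log^{2}x = o(x),$$
because the sum is supported on $m\le \log_2 x$ and each term is bounded by $x^{1/m}$. Dividing by $x$ shows $\Psi(x)/x$ and $\Theta(x)/x$ have the same limsup and liminf.

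Next I would compare $\pi$ and $\Theta$. The upper bound is immediate: every prime $p\le x$ contributes at most $\log x$ to $\Theta$, so
$$\Theta(x)\;\le\;\pi(x)\log x,\qquad\text{i.e.}\qquad \frac{\Theta(x)}{x}\le\frac{\pi(x)}{x/\log x}.$$
For the reverse inequality I would use the standard truncation trick: fix $0<\alpha<1$ and discard the small primes. Since every prime $p$ with $x^{\alpha}<p\le x$ contributes $\log p>\alpha\log x$ to $\Theta(x)$,
$$\Theta(x)\;\ge\;\sum_{x^{\alpha}<p\le x}\log p\;\ge\;\alpha\log x\,\bigl(\pi(x)-\pi(x^{\alpha})\bigr)\;\ge\;\alpha\log x\,\bigl(\pi(x)-x^{\alpha}\bigr).$$
Rearranging and dividing by $x$ gives
$$\frac{\pi(x)}{x/\log x}\;\le\;\frac{1}{\alpha}\cdot\frac{\Theta(x)}{x}+x^{\alpha-1}\log x,$$
and the error term tends to $0$.

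Finally I would combine the two bounds. Taking $\limsup$ in the chain
$$\frac{\Theta(x)}{x}\;\le\;\frac{\pi(x)}{x/\log x}\;\le\;\frac{1}{\alpha}\cdot\frac{\Theta(x)}{x}+o(1)$$
yields
$$\limsup_{x\to\infty}\frac{\Theta(x)}{x}\;\le\;\limsup_{x\to\infty}\frac{\pi(x)}{x/\log x}\;\le\;\frac{1}{\alpha}\limsup_{x\to\infty}\frac{\Theta(x)}{x}.$$
Letting $\alpha\to 1^{-}$ forces equality, and the exact same argument with $\liminf$ handles the other case. Chaining with the $\Theta$–$\Psi$ equivalence from step one completes the proof.

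The main obstacle is the lower direction of the $\pi$–$\Theta$ comparison: one must not try to prove $\Theta(x)\ge \pi(x)\log x\cdot(1-o(1))$ directly (which is essentially PNT), but rather settle for the weaker bound with a parameter $\alpha<1$ and then pass $\alpha\to 1$ \emph{after} taking the limit. Once that subtlety is handled, everything else is bookkeeping.
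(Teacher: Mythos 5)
Your proof is correct, and its core step --- truncating at $x^{\alpha}$ to get $\Theta(x)\ge\alpha\log x\,(\pi(x)-x^{\alpha})$ and letting $\alpha\to1^{-}$ only \emph{after} passing to the limit --- is exactly the paper's argument. The one place you genuinely diverge is the $\Psi$--$\Theta$ comparison: the paper never estimates $\Psi-\Theta$ at all, but instead closes a cyclic chain of inequalities,
$\limsup_{x\to\infty}\Theta(x)/x \ \ge\ \limsup_{x\to\infty}\pi(x)/(x/\log x)\ \ge\ \limsup_{x\to\infty}\Psi(x)/x\ \ge\ \limsup_{x\to\infty}\Theta(x)/x$,
using $\Psi(x)=\sum_{p\le x}\left[\log x/\log p\right]\log p\le\pi(x)\log x$ for the middle link; you instead prove the quantitative bound $\Psi(x)-\Theta(x)=\sum_{m\ge2}\Theta(x^{1/m})\ll\sqrt{x}\log^{2}x=o(x)$ and compare $\pi$ with $\Theta$ directly. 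Both routes work; yours yields slightly more (an explicit error term for the prime-power contribution), while the paper's sandwich avoids estimating that contribution entirely. One tiny imprecision in your write-up: without invoking a Chebyshev bound the individual terms satisfy only $\Theta(x^{1/m})\le x^{1/m}\log x$ rather than $\le x^{1/m}$, but that trivial bound still gives $\ll\sqrt{x}\log^{2}x$, so nothing is lost.
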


\begin{proof}
Clearly $\Theta(x) \leq \Psi(x),$ such that
$$
\limsup _{x \rightarrow \infty} \frac{\Psi(x)}{x} \geq \limsup _{x \rightarrow \infty} \frac{\Theta(x)}{x}
$$
Also, if $p$ is a prime and $p^{m} \leq x<p^{m+1},$ then $\log p$ occurs in the sum for $\Psi(x)$ exactly $m$ times.\cite{Murty}
$\therefore$
$$
\begin{aligned}
\Psi(x)=& \sum_{\substack{p^{m} \leq x \\ p \text { prime } \\
 m \geq 1 }} \log p \\
=& \sum_{p \leq x}\left[\frac{\log x}{\log p}\right] \log p \\
\leq & \sum_{\substack{p \leq x \\ p \text { prime }}} \log x \\
=& \pi(x) \log x
\end{aligned}
$$
$$
\limsup _{x \rightarrow \infty} \frac{\Psi(x)}{x} \leq \limsup _{x \rightarrow \infty} \frac{\pi(x)}{x / \log x}
$$
Now fix $\alpha \in(0,1)$. Given $x>1$,
$$
\Theta(x)=\sum_{p \leq x \atop p \text { prime }} \! \log p \geq \sum_{x^{\alpha}<p \leq x \atop p \text { prime }} \! \log p.
$$
It is clear that all $p$ from the second sum satisfy: $\log p>\alpha \log x .$

$\therefore$
$$
\begin{aligned}
\Theta(x)>& \alpha \log x \sum_{x^{\alpha}<p \leq x \atop p \text { prime }} 1 \\
&=\alpha \log x\left(\pi(x)-\pi\left(x^{\alpha}\right)\right) \\
&>\alpha \log x\left(\pi(x)-x^{\alpha}\right)
\end{aligned}
$$

$\ni$
$$
\frac{\Theta(x)}{x}>\frac{\alpha \pi(x)}{x / \log x}-\frac{\alpha \log x}{x^{1-\alpha}}
$$

\pagebreak
$\forall \alpha \in(0,1)$ we have:
$$
\lim _{x \rightarrow \infty} \frac{\alpha \log x}{x^{1-\alpha}}=0.
$$
Combining these we get:
$$
\limsup _{x \rightarrow \infty} \frac{\Theta(x)}{x} \geq \limsup _{x \rightarrow \infty} \frac{\alpha \pi(x)}{x / \log x}
$$
Once again, since our statement is true $\forall \alpha \in(0,1),$ 
$$
\limsup _{x \rightarrow \infty} \frac{\Theta(x)}{x} \geq \limsup _{x \rightarrow \infty} \frac{\pi(x)}{x / \log x}
$$
Similarly:
$$
\liminf _{x \rightarrow \infty} \frac{\Psi(x)}{x} \geq \liminf _{x \rightarrow \infty} \frac{\Theta(x)}{x}
$$
$$
\liminf _{x \rightarrow \infty} \frac{\pi(x)}{x / \log x} \geq \liminf _{x \rightarrow \infty} \frac{\Psi(x)}{x}
$$
Once again, we apply the same method:
$$
\liminf _{x \rightarrow \infty} \frac{\Theta(x)}{x} \geq \liminf _{x \rightarrow \infty} \frac{\pi(x)}{x / \log x},
$$
and have thus proven $\mathbf{2.1}$.
\end{proof}

\section{Main result}

\begin{thm}
$$
2 \log 2 \geq \limsup _{x \rightarrow \infty} \frac{\pi(x)}{x / \log x} \geq \liminf _{x \rightarrow \infty} \frac{\pi(x)}{x / \log x} \geq \log 2
$$
\begin{center}
    $\wedge \  \exists  $m, $M \in \mathbb{R}^{+}$  such that
\end{center} 
$$
M>\frac{\pi(x)}{x / \log x}>m
$$
for sufficiently large $x$.
\end{thm}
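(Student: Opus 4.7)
The plan is to establish both bounds using Chebyshev's method applied to the central binomial coefficient $\binom{2n}{n}$, transfer the resulting estimates to $\pi(x)/(x/\log x)$ via Theorem~2.1, and then read off the existence of $m, M$ as an immediate corollary.

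For the upper bound, I would observe that every prime $p$ with $n < p \leq 2n$ appears exactly once in the numerator of $\binom{2n}{n}=(2n)!/(n!)^{2}$ and not at all in the denominator, while the binomial expansion of $(1+1)^{2n}$ gives $\binom{2n}{n}\leq 2^{2n}$. Hence
$$\Theta(2n)-\Theta(n)\leq \log\binom{2n}{n}\leq 2n\log 2.$$
I would then prove by strong induction on $n$ that $\Theta(n)\leq 2n\log 2$ for every positive integer $n$. The even case $n=2m$ follows directly from the inequality above together with the inductive hypothesis applied to $m$; the odd case $n=2m+1$ uses the companion bound $\binom{2m+1}{m}\leq 2^{2m}$, which follows from $\binom{2m+1}{m}=\binom{2m+1}{m+1}$ and the fact that these two equal summands together contribute at most $2^{2m+1}$ to $(1+1)^{2m+1}$. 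Since $\Theta(x)=\Theta(\lfloor x\rfloor)$, the bound extends to all real $x\geq 1$, giving $\limsup \Theta(x)/x\leq 2\log 2$, and Theorem~2.1 then yields $\limsup \pi(x)/(x/\log x)\leq 2\log 2$.

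For the lower bound, I would invoke Legendre's formula
$$v_{p}\!\left(\binom{2n}{n}\right)=\sum_{k\geq 1}\left(\left\lfloor\tfrac{2n}{p^{k}}\right\rfloor-2\left\lfloor\tfrac{n}{p^{k}}\right\rfloor\right),$$
noting that each summand is $0$ or $1$ and vanishes once $p^{k}>2n$. Hence every prime power $p^{v_{p}}$ dividing $\binom{2n}{n}$ is at most $2n$, which yields $\log\binom{2n}{n}\leq\Psi(2n)$. Combined with $\binom{2n}{n}\geq 4^{n}/(2n+1)$ (since $\binom{2n}{n}$ is the largest among $2n+1$ binomial coefficients summing to $4^{n}$), this gives
$$\Psi(2n)\geq 2n\log 2-\log(2n+1).$$
Extending from $x=2n$ to real $x$ by monotonicity of $\Psi$ and dividing by $x$ produces $\liminf \Psi(x)/x\geq \log 2$, and Theorem~2.1 supplies the corresponding bound for $\pi$. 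The existence of positive $m$ and finite $M$ then falls out immediately from the real-analysis definitions of $\liminf$ and $\limsup$ recorded in the introduction: since $\liminf \pi(x)/(x/\log x)\geq \log 2>0$ and $\limsup \pi(x)/(x/\log x)\leq 2\log 2<\infty$, any choice $m\in(0,\log 2)$ and $M>2\log 2$ satisfies $m<\pi(x)/(x/\log x)<M$ for all sufficiently large $x$.

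The main obstacle I anticipate is the induction underlying the upper bound. A naive iteration of $\Theta(2n)-\Theta(n)\leq 2n\log 2$ along powers of $2$ gives $\Theta(2^{K})\leq 2\cdot 2^{K}\log 2$, but interpolating between $2^{K}$ and $2^{K+1}$ via monotonicity alone loses a factor of $2$ and yields only the weaker constant $4\log 2$. The strong induction sidesteps this by handling odd arguments directly, which is why the supplementary combinatorial estimate on $\binom{2m+1}{m}$ is essential rather than cosmetic.
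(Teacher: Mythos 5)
Your proposal is correct, and it runs on the same engine as the paper's proof --- Chebyshev's method applied to the central binomial coefficient, transferred to $\pi(x)$ via Theorem 2.1 --- but the lower bound is executed by a genuinely different route. On the upper bound you and the paper essentially coincide: the paper bounds $\theta(2n-1)-\theta(n)$ by $\log\binom{2n-1}{n-1}$ (your $\binom{2m+1}{m}$ in different notation) and runs the same strong induction; it imports the sharper estimate $\binom{2n}{n}<2^{2n}/\sqrt{2n+1}$ from its Lemma 3.3, but as you observe the trivial bound $2^{2m}$ already closes the induction, and your remark about why doubling plus monotonicity only yields $4\log 2$ is precisely the reason both arguments must treat odd integers directly rather than iterate along powers of $2$. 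On the lower bound the paper does not use $\binom{2n}{n}\ge 4^{n}/(2n+1)$ at all: it introduces $S(x)=\sum_{n\le x}\log n-2\sum_{n\le x/2}\log n$, proves $S(x)\le\Psi(x)$ by expanding $\log n=\sum_{d\mid n}\Lambda(d)$ and interchanging summation so that the coefficients $[x/d]-2[x/(2d)]\in\{0,1\}$ appear, and then estimates $S(x)$ from below by comparing $\sum_{n\le x}\log n$ with $\int\log t\,dt$. Your version reaches the same key inequality $\log\binom{2n}{n}\le\Psi(2n)$ through Legendre's formula (the paper's Lemma 3.2 in different clothing, since $S(2n)=\log\binom{2n}{n}$ for integer arguments) and replaces the integral comparison with the pigeonhole bound on $\binom{2n}{n}$; this is more elementary, avoids the Stirling-type computation, and gives the cleaner error term $\log(2n+1)$, at the cost of not isolating the $\Lambda$-convolution identity that the paper's presentation highlights. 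Your concluding derivation of $m$ and $M$ from the positivity of the liminf and finiteness of the limsup is sound, and is in fact more explicit than the paper, which leaves that clause of the theorem essentially implicit.
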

\begin{proof}
First the lower bound. Take:
$$
S(x):=\sum_{1 \leq n \leq x} \log n-2 \sum_{1 \leq n \leq x / 2} \log n.
$$

$\wedge$
$$
\begin{aligned}
\sum_{\substack{1 \leq d \leq n \\ d\mid n}} \Lambda(d)=& \sum_{\substack{p^{r}|d \\ d| n \\ p \text{ prime}}} \log p \\
=& \sum_{i=1}^{l} \sum_{r=1}^{e_{i}} \log p_{i} \quad \text { where } n=p_{1}^{e_{1}} \cdots p_{l}^{e_{l}} \\
=&\sum_{i=1}^{l} e_{i} \log p_{i} \\
=&\log n
\end{aligned}
$$

$\therefore$
$$
S(x)=\sum_{1 \leq n \leq x} \sum_{d \mid n} \Lambda(d)-2 \sum_{1 \leq n \leq x / 2} \sum_{d \mid n} \Lambda(d)
$$
Clearly $\{d, 2 d, \ldots, q d\}$ is the set of $n$ satisfying $1 \leq n \leq x$ and $d \mid n$ (we can see this easily by writing $x=r + q d$ with $0 \leq r<d$).

$\therefore$
$$
\begin{aligned}
S(x) &=\sum_{1 \leq d \leq x} \Lambda(d)\left[\frac{x}{d}\right]-2 \sum_{1 \leq d \leq x / 2} \Lambda(d)\left[\frac{x}{2 d}\right] \\
&=\sum_{1 \leq d \leq x / 2} \Lambda(d)\left(\left[\frac{x}{d}\right]-2\left[\frac{x}{2 d}\right]\right)+\sum_{(x / 2)<d \leq x} \Lambda(d)\left[\frac{x}{d}\right] \\
& \leq \sum_{1 \leq d \leq x / 2} \Lambda(d)+\sum_{(x / 2)<d \leq x} \Lambda(d) \\
&=\Psi(x).
\end{aligned}
$$
So,
$$
\frac{\Psi(x)}{x} \geq \frac{S(x)}{x}=\frac{1}{x} \sum_{1 \leq n \leq x} \log n-\frac{2}{x} \sum_{1 \leq n \leq x / 2} \log n.
$$
$\log t$ is increasing,

$\therefore$
$$
\int_{1}^{x+1} \log t \ d t \geq \sum_{1 \leq n \leq x} \log n,
$$
$$
\int_{1}^{[x]} \log t \ d t \leq \sum_{1 \leq n \leq x} \log n.
$$
Actually, assuming $x \in \mathbb{Z}^{+},$
$$
\begin{aligned}
\frac{S(x)}{x} & \geq \frac{1}{x} \int_{1}^{x} \log t \ d t-\frac{2}{x} \int_{1}^{(x / 2)+1} \log t \ d t \\
&=\frac{1}{x}(x \log x-x+1)-\frac{2}{x}\left(\frac{x+2}{2} \log \left(\frac{x+2}{2}\right)-\frac{x+2}{2}+1\right) \\
&=\log x+\frac{1}{x}-\frac{x+2}{x} \log (x+2)+\frac{x+2}{x} \log 2 \\
&>\log \left(\frac{x}{x+2}\right)-\frac{2}{x} \log (x+2)+\log 2
\end{aligned}
$$
Using $\mathbf{2.1}$, we get:
$$
\begin{aligned}
\liminf _{x \rightarrow \infty} \frac{\pi(x)}{x / \log x} &=\liminf _{x \rightarrow \infty} \frac{\Psi(x)}{x} \\
& \geq \liminf _{x \rightarrow \infty} \frac{S(x)}{x} \\
&>\lim _{x \rightarrow \infty} \log (x /(x+2))-\frac{2}{x} \log (x+2)+\log 2 \\
&=\log 2
\end{aligned}
$$

To complete the proof, we will need some auxiliary results taken from a Murty's Analytic Number Theory\cite{Murty} in the form of three lemmas:
\begin{lem}
$$\operatorname{ord}_{p}(m !)=\sum_{r \geq 1}\left[\frac{m}{p^{r}}\right], \forall m \in \mathbb{Z}^{+}, \text{ prime } p$$
\end{lem}
\begin{proof}
Fix an exponent $r$. The positive integers no larger than $m$ that are multiples of
$p^{r}$ are
$$
p^{r}, 2 p^{r}, \ldots,\left[\frac{m}{p^{r}}\right] p^{r}
$$
and those that are multiples of $p^{r+1}$ are
$$
p^{r+1}, 2 p^{r+1}, \ldots,\left[\frac{m}{p^{r+1}}\right] p^{r+1}
$$
Thus there are precisely $\left[\mathrm{m} / \mathrm{p}^{r}\right]-\left[\mathrm{m} / \mathrm{p}^{r+1}\right]$ positive integers $n \leq m$ with $\operatorname{ord}_{p}(n)=r .$

$\therefore$
$$
\begin{aligned}
\operatorname{ord}_{p}(m !) &=\sum_{n=1}^{m} \operatorname{ord}_{p}(n) \\
&=\sum_{r \geq 1} \sum_{1 \leq n \leq m \atop \text {ord}_{p}(n)=r} r \\
&=\sum_{r \geq 1} r\left(\left[m / p^{r}\right]-\left[m / p^{r+1}\right]\right) \\
&=\sum_{r \geq 1} r\left[m / p^{r}\right]-\sum_{r \geq 1} r\left[m / p^{r+1}\right] \\
&=\sum_{r \geq 1} r\left[m / p^{r}\right]-\sum_{r \geq 1}(r-1)\left[m / p^{r}\right] \\
&=\sum_{r \geq 1}\left[\frac{m}{p^{r}}\right]
\end{aligned}
$$
\end{proof}
\begin{lem}
$\forall n \in \mathbb{Z}^{+}$,
$$
\frac{2^{2 n}}{2 \sqrt{n}}<\binom{2n}{n}<\frac{2^{2 n}}{\sqrt{2 n+1}}
$$
\end{lem}
\begin{proof}
$$
\begin{aligned}
P_{n} &:=\prod_{i \leq n} \frac{(2 i-1)}{(2 i)} \\
&=\frac{(2 n) !}{2^{2 n}(n !)^{2}} \\
&=\binom{2n}{n} \frac{1}{2^{2 n}}
\end{aligned}
$$
Since:
$$
\frac{(2 i-1)(2 i+1)}{(2 i)^{2}}<1
$$
for all $i \geq 1$. 

$\therefore$
$$1>(2 n+1) P_{n}^{2},$$ 
giving the upper bound. For the lower bound:
$$
1-\frac{1}{(2 i-1)^{2}}<1
$$
$\forall i \geq 1,$ such that
$$
\begin{aligned}
1 &>\prod_{i=2}^{n}\left(1-\frac{1}{(2 i-1)^{2}}\right) \\
&=\prod_{i=2}^{n} \frac{(2 i-1)^{2}-1}{(2 i-1)^{2}} \\
&=\prod_{i=2}^{n} \frac{(2 i-2)(2 i)}{(2 i-1)^{2}} \\
&=\frac{1}{4 n P_{n}^{2}}
\end{aligned}
$$
yielding our lemma.
\end{proof}
\begin{lem}
$\forall n \in \mathbb{Z}^{+}$,
$$
\theta(n)<2 n \log 2
$$
\end{lem}
\begin{proof}

By $\mathbf{3.3}$,
$$
\begin{aligned}
\log \left(\binom{2n}{n} \frac{1}{2}\right) &=\log \left(\binom{2n}{n}\right)-\log 2 \\
&<2 n \log 2-\frac{1}{2} \log (2 n+1)-\log 2 \\
&=(2 n-1) \log 2-\frac{1}{2} \log (2 n+1)
\end{aligned}
$$
since
$$
\binom{2n}{n} \frac{1}{2}=\frac{(2 n) !}{(n !)^{2}} \frac{n}{2 n}=\frac{(2 n-1) !}{n !(n-1) !}=\binom{2n-1}{n-1}.
$$
by $\mathbf{3.2}$:
$$
\begin{aligned}
\log \left(\binom{2n}{n} \frac{1}{2}\right) &=\log \binom{2n-1}{n-1} \\
&=\sum_{p \text { prime }} \operatorname{ord}_{p}((2 n-1) !) \log p-\sum_{p \text { prime }} \operatorname{ord}_{p}((n-1) !) \log p-\sum_{p \text { prime }} \operatorname{ord}_{p}(n !) \log p \\
&=\sum_{p \text { prime }} \log p \sum_{r \geq 1}\left[(2 n-1) / p^{r}\right]-\left[(n-1) / p^{r}\right]-\left[n / p^{r}\right] \\
& \geq \sum_{\substack{p \text { prime } \\ n < p \leq 2 n-1}} \log p \\
&=\theta(2 n-1)-\theta(n)
\end{aligned}
$$
where
$$
\theta(2 n-1)-\theta(n)<(2 n-1) \log 2-\frac{1}{2} \log (2 n+1)
$$
We now proceed by induction. Proceeding from the trivialities, suppose $m>2$ and
the lemma is true for $n<m,$ $n, m \in \mathbb{Z}$. If $m$ is odd, then $m=2 n-1$ for some integer $n \geq 2$ since $m>2$.
Thus by induction,
$$
\begin{aligned}
\theta(m)=\theta(2 n-1) &<\theta(n)+(2 n-1) \log 2-\frac{1}{2} \log (2 n+1) \\
&<2 n \log 2+(2 n-1) \log 2-\frac{1}{2} \log (2 n) \\
&=(4 n-1) \log 2-\frac{1}{2} \log (2 n) \\
& \leq(4 n-2) \log 2 \quad(\text {since } n \geq 2) \\
&=2 m \log 2
\end{aligned}.
$$
If $m$ is even, then $m=2 n$ for some integer $n$ with $m>n \geq 2$ and $m$ is composite. Clearly $\theta(m)=\theta(m-1)$
and we know:
$$
\theta(m)=\theta(m-1)<2(m-1) \log 2<2 m \log 2
$$
$\mathbf{3.4}$ gives 
$$2 \log 2 \geq \lim \sup _{x \rightarrow \infty} \frac{\Theta(x)}{x} .$$
\end{proof}
The desired lower bound follows from $\mathbf{2.1}$.
\end{proof}

\section{On primes in the gaps between squares}

The following is relatively aleatory compared to the previous workings, but it is worth mentioning considering the importance of the statement.

By Hassani\cite{Hassani}, we have
$$
\left\lfloor\frac{1}{2}\left(\frac{(x+1)^{2}}{\log (x+1)}-\frac{x^{2}}{\log x}\right)-\frac{\log ^{2} x}{\log \log x}\right\rfloor \leq \pi\left((x+1)^{2}\right)-\pi\left(x^{2}\right)
$$
$$
\frac{1}{2}\left(\frac{x^{2}}{\log n}-\frac{3^{2}}{\log 3}\right)-\sum_{j=3}^{x-1} \frac{\log ^{2} x}{\log \log k}<\pi\left(x^{2}\right)-\pi\left(3^{2}\right)
$$
And thus:
$$
\sum_{j=3}^{x-1} \left \lfloor \frac{1}{2}\left(\frac{(j+1)^{2}}{\log (j+1)}-\frac{j^{2}}{\log j}\right)-\frac{\log ^{2} j}{\log \log j}\right\rfloor<\sum_{j=3}^{n-1} \pi\left((j+1)^{2}\right)-\pi\left(j^{2}\right)
$$

$\therefore$

$$
\left \lfloor\frac{1}{2}\left(\frac{\left(x+1\right)^{2}}{\log\left(x+1\right)}-\frac{x^{2}}{\log x}\right)-\frac{\left(\log x\right)^{2}}{\log\left(\log x\right)}\right \rfloor \leq \pi((x+1)^{2}) - \pi(x^{2}).
$$

\pagebreak
And by the prime number theorem, which gives us the asymptotic estimate for some
$$F(x) := \pi((x+1)^{2}) - \pi(x^{2}) \sim \frac{1}{2}\left(\frac{\left(x+1\right)^{2}}{\log\left(x+1\right)}-\frac{x^{2}}{\log x}\right)$$

We propose:
$$
\pi((x+1)^{2}) - \pi(x^{2}) \leq
\left \lfloor\frac{1}{2}\left(\frac{\left(x+1\right)^{2}}{\log\left(x+1\right)}-\frac{x^{2}}{\log x}\right) + \log^{2}x\log\log x \right \rfloor
$$

by the same method.

\vskip.5in

\end{document}